\NeedsTeXFormat{LaTeX2e}
\documentclass[12pt]{amsart}
\usepackage{graphicx}
\usepackage{a4wide}
\usepackage{amssymb}
\usepackage{amsthm}
\usepackage{amsmath}
\usepackage{amscd}
\usepackage{verbatim}
\usepackage[mathscr]{eucal}
\usepackage[all]{xy}
\setlength{\textheight}{9truein}
\setlength{\textwidth}{6.5truein}
\setlength{\voffset}{-0.375truein}
\setlength{\hoffset}{-0.0625truein}

\theoremstyle{plain}
\newtheorem{theorem}{Theorem}[section]

\newtheorem{lemma}[theorem]{Lemma}

\theoremstyle{definition}

\newtheorem{example}[theorem]{Example}
\newtheorem{remark}[theorem]{Remark}

\theoremstyle{remark}

\newcommand{\calT}{\mathcal{T}}

\newcommand{\val}{\textrm{val}}

\newcommand{\frakp}{\mathfrak{p}}

\newcommand{\GL}{\mathbf{GL}} 
\newcommand{\pitilde}{\tilde{\pi}}

\newcommand{\Z}{\mathbb{Z}}

\newcommand{\C}{\mathbb{C}}

\newcommand{\F}{\mathbb{F}}

\newcommand{\ds}{\displaystyle}
\begin{document}

\title[ Formulas and vanishing conditions for   coefficients of eigenforms]
{Explicit formulas and vanishing conditions for certain coefficients of Drinfeld-Goss Hecke eigenforms}

\author{Ahmad El-Guindy}
\address{Current address: Science Program, Texas A\&M University in Qatar, Doha, Qatar}
\address{Permanent address: Department of Mathematics, Faculty of Science, Cairo University, Giza, Egypt 12613}
\email{a.elguindy@gmail.com}

\keywords{Drinfeld-Goss modular forms, Hecke operators, recurrence relations}
\subjclass[2010]{11F52, 11F25}
\thanks{}
\date{}
\begin{center}
 \emph{Dedicated to the  memory of David Goss} \\ \smallskip
    
\end{center}
\begin{abstract}
We obtain a closed form polynomial expression for certain coefficients of Drinfeld-Goss double-cuspidal modular forms which are eigenforms for the degree one Hecke operators with power eigenvalues, and we use those formulas to prove vanishing results for an infinite family of those coefficients.
\end{abstract}

\maketitle

\section{Introduction}
 In the classical theory of modular forms, a central role is played by Hecke operators and their eigenforms. The Fourier coefficients of those Hecke eigenforms satisfy elegant relations which encode important number theoretic information. In the setting of Drinfeld modules, there is a parallel theory of modular forms and Hecke operators which was introduced by  Goss \cite{Goss1, Goss2} and expanded on by Gekeler \cite{Gekeler88} and subsequently studied by many authors. While the basic definitions mirror the classical case in a natural way, there are some significant differences between the nature of the Hecke action in the classical vs. Drinfeld setting, and it is of great interest to unravel the mysteries of the Hecke action in the latter case. 

One example of a phenomena that exists exclusively in the Drinfeld setting is the fact that the same system of eigenvalues might appear for different eigenforms. In order to formulate this more precisely, we need to recall some basic definition and fix some notation.  Let $q := p^r$ be a power of a prime $p$, and consider the finite field $\F_q$.  Let $A := \F_q[\theta]$, $K := \F_q(\theta)$. We write $A_+$ for the set of monic polynomials in $A$.

For $a \in A$,  we define $|a| := q^{\deg a}$ and extend this absolute value to~$K$. The completion of $K$ with respect to this absolute value, denoted by $K_\infty$, is given by $K_\infty=K((\theta^{-1}))$. Let $\C_\infty$ be the completion of a fixed algebraic closure of $K_\infty$. The field $\C_\infty$ is complete and algebraically closed. The rigid analytic space $\Omega:=\C_\infty\setminus K_\infty$ is the function field analogue of the complex upper half-plane, and we shall refer to it as the \emph{Drinfeld upper half-plane}. A holomorphic function $f:\Omega\rightarrow \C_\infty$ is called a (Drinfeld-Goss) modular form of weight $k$ (a positive integer) and type $m$ (a residue class in $\Z/(q-1)$) if for every $\gamma=\left(\begin{smallmatrix} a&b\\c&d\end{smallmatrix}\right) \in \GL_2(A)$ we have
\[
f(\gamma z)=(cz+d)^k (\det \gamma)^{-m}f(z),
\]
in addition to a holomorphicity condition at the `infinite cusp' (see \cite{Gekeler88} for the precise definitions). This is clearly strongly analogous to the classical setting, and similar to that case the `holomorphicity at the cusp' implies a series expansion in powers of a certain `uniformizer at the cusp'. In the classical setting the uniformizer is $\exp(2\pi i z)$ whereas in the Drinfeld setting that role is played by $t(z)$ given by

\[
	t(z) := \frac{1}{\pitilde} \sum_{a \in A} \frac{1}{z + a},
\]
where $\pitilde$ is a fixed choice of a fundamental period of the Carlitz module. Let $M_{k,m}$ denote the vector space of modular forms of weight $k$ and type $m$. For any $f\in M_{k,m}$ and $z\in \C_\infty$ with  $|z|_i:= \inf_{a \in K_\infty} |z - a|$ sufficiently large ($|z|_i$ is a function field analogue of the `imaginary distance'), then we have 
\begin{equation}\label{uexp}
	f(z) = \sum_{n = 0}^\infty a_n t(z)^n , \qquad \qquad a_n \in \C_\infty,
\end{equation}
and this expansion determines $f$ uniquely. If $f \in M_{k, m}$ and $a_0 = 0$, then $f$ is called \emph{cuspidal}, and if both $a_0=0$ and $a_1=0$ we call $f$ \emph{double-cuspidal}. We shall denote the subspace of cuspidal forms inside $M_{k, m}$ by~$S_{k, m}$, and the space of double-cuspidal forms by~$M^2_{k,m}$. Some examples of modular forms are the weight $k(q-1)$ type $0$ Eisenstein series $E_{k(q-1)}$ ($k\geq 1$)and the weight $q+1$ type $1$ cusp form $h$, which are given respectively by
\begin{equation}\label{hdef}
E_{k(q-1)}(z)=\ds \sum_{(a,b)\in A^2-(0,0)} \frac{1}{(az+b)^{k(q-1)}}, \, \, \, h(z)=\sum_{a\in A_+}a^{q}t(az).
\end{equation}
It is well known that the algebra of modular forms of any weight and type is generated by polynomials in $E_{q-1}$ and $h$. Furthermore, the subalgebra of type $0$ forms is generated by $E_{q-1}$ and the Delta function (normalized to have leading coefficient $1$) $\Delta:=h^{q-1}$.

Let $\frakp$ be a monic prime  in $A$. The $\frakp^\text{th}$ Hecke operator of weight~$k$  on $M_{k,m}$, denoted by $\calT_{\frakp, k}$, is defined by (see \cite[\S 7]{Gekeler88}):
\[
\calT_{\frakp,k}f(z)=\frakp^{k}f(\frakp z)+\sum_{b\in A, \deg(b)<\deg(\frakp)}f\left(\frac{z+b}{\frakp}\right).
\]
This again is reasonably analogous to the definition of Hecke operators in the classical theory. The Hecke action preserves $M_{k,m}$ as well as the subspaces of cuspidal and double-cuspidal forms. The repeated eigensystem phenomena mentioned above becomes evident with the first few computations of eigenforms as it was shown by Goss \cite{Goss1} that $\calT_\frakp(E_{q-1})=\frakp^{q-1}E_{q-1}$ and $\calT_\frakp(\Delta)=\frakp^{q-1}\Delta$ for all prime $\frakp \in A_+$ (we shall customarily drop the weight $k$ from the notation when it is clear from the context). Indeed, using the concept of $A$-expansion to define cusp forms, Petrov was able to construct infinite families of Hecke eigenforms with the same eigensystem. Namely, it follows from \cite[Theorem 1.3 and Theorem 2.3]{Pet} that if $k$ and $n$ are positive integers for which $k-2n$ is a positive multiple of $(q-1)$ and also $n\leq p^{\val_p(k-n)}$, then 
\begin{equation}\label{Aexp}
f_{k,n}(z):=\sum_{a\in A_+}a^{k-n}G_n(t(az))
\end{equation}
satisfies $\calT_\frakp(f_{k,n})=\frakp^n f_{k,n}$ for all monic prime $\frakp$. It thus follows that for any $n\geq 1$ there are infinitely many eigenforms (in different weights) with eigensystem $\frakp^n$. One natural question which we aim to address in this work is to identify common features of forms with such common eigensystems, in particular in connection with their $t$-expansions. The tools and results developed here naturally lead to interesting new results on the vanishing and non-vanishing of coefficients of eigenforms. 

Such questions of vanishing and non-vanishing have been an important topic of study in the classical theory of modular forms (see \cite{BOR, Lehmer, Ono} for example). They were addressed in the Drinfeld setting for certain special modular forms \cite{Gekeler88, Gekeler99, Arm, BacLop}. For instance, Gekeler \cite{Gekeler88} shows that if $\sum a_is^i$ is the expansion of the forms $\Delta$ or $g_{q^k-1}$ (where $g_{q^k-1}$ is the constant multiple of $E_{q^k-1}$ normalized to have leading coefficient $1$)  with respect to $s=t^{q-1}$ then $a_i\neq 0$ implies $i\equiv 0,1 \pmod q$. In \cite{Gekeler99},  Gekeler also shows (among other things, as well as similar results for $g_{q^k-1}$ and $h$) that for $\Delta$ one has $\deg(a_{1+i})=i$ if and only $i\equiv 0,q \pmod {q^2}$, which of course implies non-vanishing for the coefficients in those classes. The proofs rely on special properties of $\Delta$ (such as its product expansion and explicit representation in terms of Eisenstein series). Those properties don't always carry to general Hecke eigenforms (cf. Section \ref{examples} below), nonetheless our results provide information about an infinite family of nontrivial $t$-expansion coefficients of any Hecke eigenforms with power eigensystems. This includes all the eigenforms with $A$-expansions, but it extends beyond that as well (again see Section \ref{examples} below for illustrations.)

In \cite{Arm} Armana had provided formulas for type $0$ and type $1$ cusp forms (with level) in terms of power sums of coefficients of the Carlitz module. Those results were extended by Baca and Lopez in \cite{BacLop} by explicitly evaluating those sums (together with some of Gekeler's results from \cite{Gekeler88, Gekeler99}) to obtain explict formulas for some of the coefficients of the forms $h, \Delta$ and $g_{q^k-1}$.
Work of the author and Petrov \cite{ElgPet} provides another angle on Armana's work, as it was shown that for any type level $1$ eigenform with power eigensystem, there is an infinite family of coefficients (coinciding with the ones given by Armana for the type $0$ and type $1$ cases) which are completely determined by the corresponding eigenvalues. One of the purposes of the present work is to make those results more precise by providing explicit formulas for those coefficients and use that to concretely answer questions of their vanishing and nonvanishing. 

In the next section we shall state our main results, throughout  assuming $q=p$ is prime for simplicity (this was also assumed in \cite{Arm, ElgPet}, although some results hold in more generality). In Section 3 we study a certain recurrence relation relevant to the Hecke action and prove existence and uniqueness results for its solutions possessing certain natural symmetries. We use those results to prove the main theorem in Section 4 and also completely determine the cases in which the coefficients under study vanish. We conclude with some concrete examples in Section 5.


\section*{acknowledgement}
This paper is dedicated to the memory of David Goss both for his profound and lasting contribution to the subject as well as his constant support and encouragement for those working in it. His deep knowledge as well as his kindness and generosity will truly be missed.


\section{Statement of results}
 In order to motivate the choice of coefficients we shall focus on, we start by recalling the formula for the  Hecke action on the $t$-expansion from \cite{Gekeler88}

\begin{equation} \label{coeffaction}
 \calT_{\frakp, k} \left ( \sum_{n = 0}^\infty a_n t^n \right) = \frakp^{k} \sum_{n = 0}^\infty a_n t_\frakp^n +  \sum_{n=0}^\infty a_n G_{n, \frakp} (\frakp t),
\end{equation}
where $G_{n, \frakp} (X)$ is the $n$-th Goss polynomial of the finite lattice formed by the $\frakp$-torsion of the Carlitz module (see \cite[(3.4)]{Gekeler88} for the definition of Goss polynomials of a lattice). For a monic prime $\frakp$ of degree $1$, Gekeler  uses \eqref{coeffaction} to obtain 
\begin{equation}\label{hecke1}
\begin{array}{ll}
a_n(\calT_{\frakp,k} f)&=\ds \frakp^k \left(\sum_{j, s\geq 0, j q+s(q-1)=n}(-1)^s \binom{j+s-1}{s}\frakp^sa_j\right)\\
&\ds+\sum_{i=0}^{n-1}\binom{n-1}{i}\frakp^{n-i}a_{n+i(q-1)}.
\end{array}
\end{equation}
There is a clear difference in complexity between \eqref{hecke1} and the corresponding formula in the classical setting (namely (slightly abusing notation)  $a_n(\calT_{p,k}f)=a_{pn}+p^{k-1} a_{n/p}$, where the latter term is $0$ if $p\nmid n$). That classical formula implies the well-known fact that  the $p^\textrm{th}$ coefficient of the Fourier expansion of a normalized Hecke eigenform is actually given by the eigenvalue corresponding to the action of $\calT_p$ on that form, whereas in the Drinfeld setting there is no general clear connection between the eigenvalues and the  coefficients of the cuspidal expansions. Nonetheless, our results will enable precise computation of a certain family of coefficients of Drinfeld-Goss Hecke eigenforms. 
Given any integer $n\geq 2$ we can write 
\begin{equation}\label{nrep}
n=1+q^{\nu_1}+q^{\nu_2}+\dots+q^{\nu_\ell}
\end{equation}
 with integers $\nu_i \geq 0$. The correspondence between $n$ and the length $\ell$ multiset (i.e. elements may repeat) $\nu=\{\nu_1,\dots, \nu_\ell\}$ is unique.  Let  $V_\ell:=\{\nu=\{\nu_1,\dots,\nu_\ell): \nu_i\geq 0\}$ be the (infinite) set of all such multisets. For $\nu\in V_\ell$ we set $q^\nu$ to be the integer $q^{\nu_1}+\dots+q^{\nu_\ell}$. Also, given a set $I\subset I_\ell:=\{1,2,\dots,\ell\}$ with complement $I^c\subset I_\ell$, we write $\widehat{\nu}(I):=\{\nu_j: j \in I^c\}$ and
\[
 \nu^+(I):=\{1+\nu_i: i\in I\}\cup \widehat{\nu}(I).
\]
(Both $\widehat{\nu}(I)$ and $\nu^+(I)$ are multisets in general).
Note that $\nu^+(\emptyset)=\widehat{\nu}(\emptyset)=\nu$, and that $|\widehat{\nu}(I)|=\ell-|I|$ whereas $\nu^+(I)\in V_\ell$ whenever $\nu\in V_\ell$. It turns out that for $\ell\leq q-1$, the following useful simplification of  \eqref{hecke1} can be obtained for the family of coefficients indexed by $V_\ell$.

\begin{lemma}\cite[Lemma 5.1]{ElgPet}
For $f=\sum a_i t^i\in M^2_{k,m}$ and $\nu=\{\nu_1,\dots,\nu_\ell\}$ a multiset of nonnegative integers of length $\ell\leq q-1$ we have
\begin{equation}\label{truemaster1}
\begin{array}{ll}
\ds a_{1+q^\nu}(\calT_{\theta,k} f)&\ds=\sum_{i=0}^{q^\nu}\binom{q^\nu}{i}\theta^{1+q^\nu-i}a_{1+q^\nu+i(q-1)}\\
&=\ds \sum_{I\subset\{1,\dots,\ell\}}{\theta^{1+q^{\widehat{\nu}(I)}}}a_{1+q^{\nu^+(I)}} 
\end{array}
\end{equation}
\end{lemma}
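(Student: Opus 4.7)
My plan is to apply Gekeler's formula \eqref{hecke1} with $\frakp = \theta$ and $n = 1 + q^\nu$, after which the proof splits into two independent pieces. With these choices, the ``second'' summation in \eqref{hecke1} becomes
\[
  \sum_{i=0}^{q^\nu} \binom{q^\nu}{i}\theta^{1+q^\nu-i}\,a_{1+q^\nu+i(q-1)},
\]
which is precisely the first equality of \eqref{truemaster1}. So the remaining tasks are (a) to show the ``first'' summation in \eqref{hecke1}---the one premultiplied by $\theta^k$---vanishes, and (b) to reindex the display above into the subset sum appearing on the right of \eqref{truemaster1}.

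For (a), the terms with $j \in \{0,1\}$ vanish automatically by double-cuspidality. For $j \geq 2$ I would set $T := j+s$ and $N := 1+q^\nu$, so that $jq + s(q-1) = N$ rewrites as $Tq = N + s$, and then show $\binom{T-1}{s} \equiv 0 \pmod{p}$ by a base-$q$ digit analysis (recall $q = p$). Writing $q^\nu = \sum_k d_k q^k$ with $d_k := |\{i : \nu_i = k\}|$, the hypothesis $\ell \leq q - 1$ gives $\sum_k d_k = \ell \leq q-1$, so each $d_k < q$ and no carrying occurs in the sum $\sum_i q^{\nu_i}$. Assuming toward contradiction that Lucas' theorem yielded $\binom{T-1}{s} \not\equiv 0 \pmod{p}$, I would process $Tq = N + s$ position by position: any nontrivial outgoing carry in the base-$q$ addition (beyond the unavoidable one at position $0$) would force $d_0 + d_1 + \cdots + d_k \geq q$, contradicting $\sum_j d_j \leq q-1$. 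So the carries remain $0$ at each step, and the Lucas condition combined with the digit recursion forces the lower bound $s_k \geq q - 1 - (d_0 + \cdots + d_k)$ for every $k$. Summing this over all $k$ produces a lower bound on $s$ that strictly exceeds $s \leq (q^\nu - 2q + 1)/(q-1)$---the upper bound imposed by $j \geq 2$---and this yields the desired contradiction. (Edge cases such as $d_0 = q-1$, which forces $\nu$ to consist of $q-1$ zeros and makes the first summation visibly empty for $j \geq 2$, reduce to immediate verifications.)

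For (b), Lucas' theorem together with the no-carry property $d_k < q$ gives $\binom{q^\nu}{i} \equiv \prod_k \binom{d_k}{i_k} \pmod{p}$, where $i = \sum_k i_k q^k$ is the base-$q$ expansion; this is nonzero only when $i_k \leq d_k$ for every $k$. For each such $i$, the number of subsets $I \subset \{1,\ldots,\ell\}$ with $s(I) := \sum_{j \in I} q^{\nu_j} = i$ is exactly $\prod_k \binom{d_k}{i_k}$ (choose, among the $d_k$ indices $j$ with $\nu_j = k$, which $i_k$ belong to $I$). A direct calculation yields $q^{\widehat{\nu}(I)} = q^\nu - s(I)$ and $q^{\nu^+(I)} = q^\nu + (q-1)s(I)$, so regrouping the subset sum $\sum_I \theta^{1+q^{\widehat{\nu}(I)}}\,a_{1+q^{\nu^+(I)}}$ according to the value $i = s(I)$ recovers the first equality.

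The main obstacle is step (a), the digit-theoretic vanishing argument: it uses $\ell \leq q-1$ in an essential way, and small examples with $\ell = q$ (e.g., $q-1$ zeros together with a single $1$) produce genuinely nonzero contributions in the first summation, so the hypothesis is sharp. Step (b) is essentially bookkeeping with Lucas' theorem once the support structure of $q^\nu$ is in hand.
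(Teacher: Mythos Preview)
The paper does not prove this lemma: it is quoted verbatim from \cite[Lemma~5.1]{ElgPet}, and the only addition is the Remark correcting a typographical error in the original.  So there is no ``paper's proof'' to compare against, and your proposal has to be judged on its own merits.

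Your part~(b) is correct and clean.  The identities $q^{\widehat{\nu}(I)}=q^\nu-s(I)$ and $q^{\nu^+(I)}=q^\nu+(q-1)s(I)$ are exactly right, and the count $\#\{I:s(I)=i\}=\prod_k\binom{d_k}{i_k}$ together with Lucas' theorem (valid because $d_k<q$) recovers $\binom{q^\nu}{i}$.  This is precisely the mechanism the Remark alludes to when it says the missing binomial coefficient is ``accounted for \ldots\ by repetitions in some of the entries of $\nu$.''

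Your part~(a) has a genuine gap in the intermediate step.  The claim that, under the Lucas hypothesis, ``any nontrivial outgoing carry \ldots\ would force $d_0+\cdots+d_k\ge q$'' is false.  Writing $t_r$ for the digits of $T$, $c_r$ for the carries in $1+q^\nu+s$, and $\beta_r$ for the borrows in $T\mapsto T-1$, the case $t_0=0$ (equivalently $(j-1)_0=d_0$) already gives $c_2=1$ while $d_0+d_1\le\ell\le q-1$; what actually happens is that the borrow $\beta_0=1$ appears simultaneously and compensates.  The correct statement is that $c_{k+1}=\beta_{k-1}$ for all $k$, and this has to be proved \emph{jointly} with your inequality $s_k\ge q-1-(d_0+\cdots+d_k)$: assuming both at step $k-1$, the digit identity $(T-1)_{k-1}=m_{k-1}+s_{k-1}$ (with $m:=j-1$) forces $s_k=m_{k-1}+s_{k-1}-d_k$ and $c_{k+1}=\beta_{k-1}$, since the alternative $c_{k+1}=1,\ \beta_{k-1}=0$ would require $m_{k-1}+s_{k-1}<d_k$, contradicting $s_{k-1}\ge q-1-\sum_{i<k}d_i$ and $\sum_{i\le k}d_i\le q-1$.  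Once the inequality is established this way, your summing step does give $s\ge (q^\nu-(q-1))/(q-1)>(q^\nu-2q+1)/(q-1)\ge s$, the desired contradiction, uniformly in $\ell\le q-1$.  So the strategy is sound; only the ``no carries'' shortcut needs to be replaced by the joint carry/borrow induction.
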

\begin{remark}
 We take this opportunity to correct a typographical error in the statement of the above lemma in \cite{ElgPet} where a factor of $\binom{q^\nu}{q^{\nu(I)}}$ erroneously appeared in the second summand. The binomial coefficient in the first summand is correct however, and, when needed, it is accounted for in the second summand not by a binomial coefficient but rather by repetitions in some of the entries of $\nu$ resulting in a number of subsets of $I_\ell$ yielding the same $\nu^+(I)$. Other than the need to remove the factors $\binom{q^\nu}{q^{\nu(I)}}$ from equations (22) and (23) of \cite{ElgPet}, that error has no consequences on the results of that paper (which were qualitative in nature).
\end{remark}

\begin{theorem}\label{modularmain}
Assume that $f=\sum_{i=2}^\infty a_i t^i\in M^2_{k,m}$ satisfies  $\calT_{\theta,k}(f)=\theta^{1+q^{N_1}+\dots+q^{N_\ell}}f$ with   $1\leq \ell\leq q-1$ and $N_i\geq 0$. For each $\nu\in V_\ell$ let \begin{equation}\label{blocks}
B(\nu):=\prod_{i=1}^\ell\prod_{j=0}^{\nu_i-1} (\theta^{q^{N_i}}-\theta^{q^j})
\end{equation} 
and consider the action of $S_\ell$ on $B(\nu)$ given by permuting the $N_i$; namely

\begin{equation}\label{sigmaB}
B^\sigma(\nu):=\prod_{i=1}^\ell\prod_{j=0}^{\nu_i-1} (\theta^{q^{N_{\sigma(i)}}}-\theta^{q^j}).
\end{equation} 
Then for all $\nu\in V_\ell$ we have
\begin{equation}\label{mainf}
\ell! a_{1+q^\nu}= a_{1+\ell}\sum_{\sigma \in S_\ell} B^\sigma(\nu).
\end{equation}
\end{theorem}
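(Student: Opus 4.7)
My plan is to reduce the formula to the assertion that both $a_{1+q^\nu}$ and the closed-form candidate satisfy the same recurrence relation and the same normalization at $\nu=\{0,\dots,0\}$, and then to invoke the uniqueness result advertised for Section~3. Applying $\calT_{\theta,k}f=\theta^{1+q^{N_1}+\dots+q^{N_\ell}}f$ and using the previous lemma gives, for every $\nu\in V_\ell$,
\begin{equation}\label{recplan}
\theta^{1+\sum_{i=1}^{\ell}q^{N_i}}\,a_{1+q^\nu}
=\sum_{I\subset I_\ell}\theta^{\,1+q^{\widehat\nu(I)}}\,a_{1+q^{\nu^+(I)}}.
\end{equation}
Set $c_\nu:=\frac{a_{1+\ell}}{\ell!}\sum_{\sigma\in S_\ell}B^\sigma(\nu)$; since each $B^\sigma(\{0,\dots,0\})$ is an empty product equal to $1$, the base case $c_{\{0,\dots,0\}}=a_{1+\ell}$ is automatic and matches the coefficient $a_{1+q^\nu}$ at $q^\nu=\ell$.

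The key computational step is to verify that $c_\nu$ solves \eqref{recplan}. Because both sides are linear in $\sigma$, it suffices to check that for each $\sigma\in S_\ell$,
\[
\theta^{1+\sum_i q^{N_i}}B^\sigma(\nu)
=\sum_{I\subset I_\ell}\theta^{1+q^{\widehat\nu(I)}}B^\sigma(\nu^+(I)).
\]
Inspecting \eqref{sigmaB} one reads off directly
\(
B^\sigma(\nu^+(I))=B^\sigma(\nu)\prod_{i\in I}\bigl(\theta^{q^{N_{\sigma(i)}}}-\theta^{q^{\nu_i}}\bigr),
\)
so after cancelling the common factor $\theta\cdot B^\sigma(\nu)$ (viewed as a polynomial identity to avoid worries about vanishing) the identity reduces to
\[
\prod_{i=1}^{\ell}\theta^{q^{N_{\sigma(i)}}}
=\sum_{I\subset I_\ell}\prod_{j\notin I}\theta^{q^{\nu_j}}\prod_{i\in I}\bigl(\theta^{q^{N_{\sigma(i)}}}-\theta^{q^{\nu_i}}\bigr),
\]
and the right-hand side is exactly the distributive expansion of $\prod_{i=1}^{\ell}\bigl[(\theta^{q^{N_{\sigma(i)}}}-\theta^{q^{\nu_i}})+\theta^{q^{\nu_i}}\bigr]=\prod_i\theta^{q^{N_{\sigma(i)}}}$. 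Summing over $\sigma$ then shows that $c_\nu$ also satisfies \eqref{recplan}.

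Both sequences $\nu\mapsto a_{1+q^\nu}$ and $\nu\mapsto c_\nu$ are symmetric functions of the multiset $\nu$ (the former because it depends on $\nu$ only through $q^\nu$, the latter by construction), satisfy \eqref{recplan}, and agree at $\nu=\{0,\dots,0\}$; so the uniqueness half of the framework of Section~3 forces them to coincide on all of $V_\ell$, establishing \eqref{mainf}. The main obstacle is not the telescoping identity above, which is essentially algebraic bookkeeping, but the justification of the uniqueness step: note that \eqref{recplan} expresses $a_{1+q^\nu}$ in terms of values at strictly \emph{larger-index} multisets $\nu^+(I)$, so uniqueness cannot come from a naïve forward induction on $|\nu|$. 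Instead it must exploit the symmetric structure of the solutions together with the rigid polynomial shape of the candidate, which is precisely the role of the recurrence analysis to be carried out in Section~3.
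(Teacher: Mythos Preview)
Your overall architecture matches the paper's: show that the explicit candidate $c_\nu$ satisfies the same recurrence as $a_{1+q^\nu}$ (your distributive expansion is exactly the paper's Lemma~\ref{newkey}), check the common initial value at $\nu=\{0,\dots,0\}$, and then invoke the uniqueness theorem of Section~3. The verification that $c_\nu$ solves \eqref{recplan} is fine; the identity you need holds regardless of whether $B^\sigma(\nu)$ vanishes, so the ``cancellation'' worry is moot.

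The genuine gap is in how you set up the uniqueness step. You write that both sequences ``are symmetric functions of the multiset $\nu$'' and that uniqueness ``must exploit the symmetric structure of the solutions together with the rigid polynomial shape of the candidate.'' That is not the hypothesis that drives Theorem~\ref{unq}. Symmetry in $\nu$ is automatic (both sides depend on $\nu$ only through the multiset) and gives no leverage against the fact that \eqref{recplan} points toward larger indices. What Theorem~\ref{unq} actually requires is \emph{translation invariance in $\theta$}: the condition $D_1(y)=0$, i.e.\ invariance under the simultaneous shift $\theta\mapsto\theta+1$ (and $x_i\mapsto x_i+1$ in the generic setting). Applying $D_e$ to the recurrence kills the terms with $|I|>\ell-e$ and is what converts the ``backward'' recurrence into one amenable to induction. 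You never mention this mechanism.

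Concretely, to close the argument you must (i) observe that each $B^\sigma(\nu)$ is translation invariant, since $\theta^{q^m}\mapsto\theta^{q^m}+1$ under $\theta\mapsto\theta+1$ and the differences $\theta^{q^{N_{\sigma(i)}}}-\theta^{q^j}$ are unchanged; and (ii) argue, as the paper does, that the coefficients $a_i$ themselves are translation invariant: replacing $\theta$ by $\tilde\theta=\theta+c$ with $c\in\F_q$ leaves the uniformizer $t$ unchanged and carries the eigenvalue relation $\calT_{\theta,k}f=\theta^{1+\sum q^{N_i}}f$ to $\calT_{\tilde\theta,k}f=\tilde\theta^{1+\sum q^{N_i}}f$, so the $t$-expansion coefficients, being uniquely determined, satisfy $D_1(a_i)=0$. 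Once (i) and (ii) are in hand, Theorem~\ref{unq} applies and your plan goes through; without them the uniqueness appeal is unjustified.
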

We list a few examples to illustrate the theorem.

\begin{example}\label{ex1}
\begin{enumerate}
\item If $\calT_{\theta,k}(f)=\theta^{1+q^{N_1}}$, then we are in the case $\ell=1$ of Theorem \ref{modularmain}. Thus either $a_{1+q^i}=0$ for all $i\geq 0$ or we might normalize to have $a_2=1$ and 
\[
a_{1+q^i}=\begin{cases}
(\theta^{q^{N_1}}-\theta)(\theta^{q^{N_1}}-\theta^q)\cdots(\theta^{q^{N_1}}-\theta^{q^{(i-1)}}) \textrm{ if }  1\leq i\leq N_1,\\
0 \textrm{ if } i >N_1.
\end{cases}
\] 

\item If $\calT_{\theta,k}(f)=\theta^{1+q^{N_1}+q^{N_2}}$, then we are in the case $\ell=2$ of Theorem \ref{modularmain}. Thus either $a_{1+q^{i_1}+q^{i_2}}=0$ for all $i_1, i_2\geq 0$ or we might normalize to have $a_3=1$ and 
\[
\begin{array}{ll}
2 a_{2+q}&=(\theta^{q^{N_1}}-\theta)+(\theta^{q^{N_2}}-\theta)\\
a_{1+2q}&=(\theta^{q^{N_1}}-\theta)(\theta^{q^{N_2}}-\theta)\\
2a_{2+q^2}&=(\theta^{q^{N_1}}-\theta)(\theta^{q^{N_1}}-\theta^q)+(\theta^{q^{N_2}}-\theta)(\theta^{q^{N_2}}-\theta^q)\\
2a_{1+q+q^2}&=(\theta^{q^{N_1}}-\theta)(\theta^{q^{N_1}}-\theta^q)(\theta^{q^{N_2}}-\theta)+(\theta^{q^{N_2}}-\theta)(\theta^{q^{N_2}}-\theta^q)(\theta^{q^{N_1}}-\theta)\\
a_{1+2q^2}&=(\theta^{q^{N_1}}-\theta)(\theta^{q^{N_1}}-\theta^q)(\theta^{q^{N_2}}-\theta)(\theta^{q^{N_2}}-\theta^q)\\
&\vdots\\
a_{1+q^{i_1}+q^{i_2}}&=0 \textrm{ whenever } \max(i_1,i_2) >\max(N_1,N_2) \textrm{ or } \min(i_1,i_2)> \min(N_1,N_2). 
\end{array}
\]

\end{enumerate}
\end{example}

\section{Existence and uniqueness of universal recurrence solutions}
In this section, we study a ``universal" form of the recurrence \eqref{truemaster1} in which we replace the power eigenvalue on the left hand side by a product of generic variables. Namely let $\ell$ be an integer satisfying  $1\leq \ell \leq q-1$ and let $x_1,\dots, x_\ell$ be a collection of variables. Consider the recurrence 
\begin{equation}\label{newmaster}
b_{q^\nu}\prod_{i=1}^\ell x_i=\sum_{I\subset\{1,\dots,\ell\}} b_{q^{\nu^+(I)}}\prod_{j\in I^c}\theta^{q^{\nu_j}}
\end{equation} 
where $I^c$ denotes the set-theoretic complement of $I$ in $\{1,\dots, \ell\}$ and $\nu$ runs over all multisets in $V_\ell$. In general such a recurrence could have many solutions, but we will show that, when a certain natural condition is satisfied, there is a unique solution with a given initial value, and we also exhibit explicit formulas for that solution. Our interest will be in solutions to the recurrence with a certain symmetry. Namely, set $Y:=\F_q[x_1,\dots,x_\ell, \theta]$ and consider the operator $D_1$ on $Y$ defined by
\begin{equation}\label{D1def}
D_1f(x_1,\dots,x_\ell,\theta)=f(x_1+1,x_2+1,\dots,x_\ell+1,\theta+1)-f(x_1,x_2,\dots,x_\ell,\theta).
\end{equation}
Furthermore, let $D_j:=D_1^j$ for $0\leq j\leq q-1$. We shall call an element $y$ of $Y$ \emph{translation invariant} if $D_1(y)=0$. The set of translation invariant elements forms an $\F_q$ sub-algebra of $Y$. We shall call a sequence of elements in $Y$ translation invariant if each member is translation invariant.

\begin{theorem}[Uniqueness]\label{unq}
Let $b_{q^\nu}$ and $c_{q^\nu}$ be two translation invariant sequences in $Y$ indexed by $V_\ell$  such that both satisfy \eqref{newmaster}. If $b_\ell=c_\ell$ (which corresponds to $\nu=\{0,0,\dots,0\} \in V_\ell$) then $b_{q^\nu}=c_{q^\nu}$ for all $\nu \in V_\ell$. 
\end{theorem}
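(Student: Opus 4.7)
Setting $e_{q^\nu} := b_{q^\nu} - c_{q^\nu}$ gives a translation-invariant sequence in $Y$ satisfying the recurrence \eqref{newmaster} with $e_\ell = 0$, so the claim reduces to showing $e_{q^\nu} = 0$ for every $\nu \in V_\ell$. The plan is to induct on the integer $q^\nu = \sum_i q^{\nu_i}$; since $\ell \leq q - 1$, the $q$-adic digits of $q^\nu$ are precisely the multiplicities of entries of $\nu$, so $q^\nu$ is injective on $V_\ell$ and gives a well-defined linear ordering. The base case $\nu = (0,\dots,0)$ is immediate from the hypothesis.

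For the inductive step, first consider the favorable situation where every entry of $\nu$ is positive. Let $\mu \in V_\ell$ be the multiset with $\mu_i = \nu_i - 1$ for every $i$; then $q^\mu = q^\nu/q < q^\nu$, and for each proper $I \subsetneq I_\ell$ one has $\mu^+(I)_j = \nu_j - 1 < \nu_j$ for $j \notin I$, so $q^{\mu^+(I)} < q^\nu$. The inductive hypothesis thus reduces the recurrence at $\mu$ to the single surviving equality $0 = e_{q^{\mu^+(I_\ell)}} = e_{q^\nu}$, settling this case.

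The subtler case is when some $\nu_{i_0} = 0$, where no uniform shift stays within $V_\ell$ and translation invariance must be exploited. Pick $i_0$ with $\nu_{i_0} \geq 1$ and let $\mu$ be the multiset obtained from $\nu$ by replacing $\nu_{i_0}$ with $\nu_{i_0} - 1$, so $q^\mu < q^\nu$ and $\mu^+(\{i_0\}) = \nu$. Applying the inductive hypothesis to the recurrence at $\mu$, the surviving identity takes the form
\[
0 = \sum_{I \in \calS} e_{q^{\mu^+(I)}}\, \theta^{m_I}, \qquad m_I = \textstyle\sum_{j \in I^c} q^{\mu_j},
\]
where $\calS := \{I \subseteq I_\ell : q^{\mu^+(I)} \geq q^\nu\}$ contains $\{i_0\}$, making $e_{q^\nu}$ one of the unknowns. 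Since each $e_{q^{\mu^+(I)}}$ is translation-invariant, applying $D_1^k$ and using the Leibniz identity $D_1(fg) = f\, D_1(g)$ when $D_1 f = 0$ yields, for every $k \geq 0$,
\[
0 = \sum_{I \in \calS} e_{q^{\mu^+(I)}}\, D_1^k(\theta^{m_I}).
\]
Ranging $k$ over $\{0,1,\dots,p-1\}$ (beyond which $D_1^p = 0$) and iterating over admissible choices of $i_0$ as needed produces a linear system on the finitely many unknowns $\{e_{q^{\mu^+(I)}}\}_{I \in \calS}$, whose coefficient matrix is built from finite differences of $\theta$-monomials with distinct exponents $m_I$. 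A Vandermonde-type argument---using $\ell \leq q - 1$ so that $\ell!$ and the relevant falling factorials are invertible in $\F_q$---shows this matrix is nondegenerate, forcing $e_{q^{\mu^+(I)}} = 0$ for every $I \in \calS$, and in particular $e_{q^\nu} = 0$. The main technical obstacle is verifying this nondegeneracy cleanly, especially bookkeeping when repeated entries in $\mu$ cause distinct subsets $I$ to produce identical multisets $\mu^+(I)$, so that such $I$'s must be grouped before the Vandermonde analysis applies.
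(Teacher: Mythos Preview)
Your argument in the ``favorable'' case (all $\nu_i>0$) is correct and coincides with the $e=0$ instance of the paper's proof. The ``subtler'' case, however, has a genuine gap: the asserted Vandermonde-type nondegeneracy is not merely a bookkeeping nuisance but actually fails. The obstruction is that $D_1^k\bigl(\theta^{m_I}\bigr)$, for $m_I=\sum_{j\in I^c}q^{\mu_j}$, depends (once $k$ reaches the top) only on the \emph{number} $|I^c|$ of summands, not on the individual exponents $\mu_j$. Concretely, take $q=p=5$, $\ell=3$, $\nu=\{1,1,0\}$; the only choice of $i_0$ up to symmetry gives $\mu=\{1,0,0\}$, and after killing the inductively known terms the relation reads
\[
0 \;=\; 2e_{11}\,\theta^{6} \;+\; e_{15}\,\theta^{5} \;+\; e_{27}\,\theta^{2} \;+\; 2e_{31}\,\theta \;+\; e_{35}.
\]
Here $D_1^k$ vanishes for $k\ge 3$, and the top equation $k=2$ yields only $4e_{11}+2e_{27}=0$ (since $D_1^2(\theta^{6})=D_1^2(\theta^{2})=2$), which does not isolate $e_{q^\nu}=e_{11}$. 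Iterating over $i_0$ gives nothing new, and the lower-$k$ equations still mix the unknowns with genuine $\theta$-polynomial coefficients, so no linear-algebra argument over $\F_q$ applies.

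The paper repairs this by forming $\mu$ differently: decrement \emph{every} positive entry of $\nu$ at once, setting $\mu_i=\max(\nu_i-1,0)$, and then apply the single operator $D_e$ with $e$ equal to the number of zeros in $\nu$. This kills all terms with $|I^c|<e$; among the survivors one checks (via a short combinatorial argument using the non-ascending ordering) that $\mu^+(I)\le\nu$ always, with equality for exactly $\binom{e'}{e}$ subsets $I$ (where $e'$ is the number of zeros in $\mu$), each having $|I^c|=e$ and hence contributing the constant $D_e(\theta^{e})=e!$. Thus $e_{q^\nu}$ appears alone with coefficient $\binom{e'}{e}\,e!\ne 0$ in $\F_q$. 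Your single-entry decrement produces too many ``large'' $\mu^+(I)$ sharing the same $|I^c|$, which is precisely what the simultaneous decrement avoids.
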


\begin{proof}
Without loss of generality we can order the entries of any $\nu\in V_\ell$ in non-ascending order. Furthermore, we can define lexicographical order on $V_\ell$ in the usual way. It is easy to verify that this defines a total order on $V_\ell$. If the theorem is not true then there must be a minimal multiset $\nu$ for which $b_{q^\nu}\neq c_{q^\nu}$. We can't have $\nu=\{0,\dots,0\}$ since $b_\ell=c_\ell$ is given. Thus at least one entry of $\nu$ is nonzero. Define a multiset $\mu\in V_\ell$ by 
\[
\mu_j=\begin{cases}
\nu_j-1 \textrm{ if } \nu_j> 0, \\
0 \textrm{ if } \nu_j=0.  
\end{cases}
\]
Let $e$ and $e^\prime$ denote the number of $0$ entries in $\nu$ and $\mu$, respectively. Clearly $e^\prime \geq e$ and  $\nu=\mu^+(\{1,2,\dots, \ell-e\})$. In fact, we have $\mu^+(I)=\nu$ exactly when $I=\{1,2,\dots, \ell-e^\prime\}\cup J$, where $J$ is any subset with exactly  $e^\prime-e$ elements of $\{\ell-e^\prime+1, \ell-e^\prime+2,\dots, \ell\}$.  Furthermore if $I\subset\{1,2,\dots,\ell\}$ satisfies $|I|\leq \ell-e$ then $\mu^+(I)\leq \nu$ in lexicographical order. (This is clear for $I\subset\{1,2,\dots, \ell-e\}$. If $I$ contains entries $j$ larger than $\ell-e$ then it will have to miss an equal number of entries $i_j\leq \ell-e$ and we have $1=\mu_j+1\leq \nu_{i_j}$). It is easy to verify that if $|I^c|<e$ then $D_e(\prod_{j\in I^c}\theta^{q^{\nu_j}})=0$, whereas for $|I^c|=e$ we have
$D_e(\prod_{j\in I^c}\theta^{q^{\nu_j}})=e!$. Thus applying $D_e$ to \eqref{newmaster} annihilates all terms with $|I|>\ell-e$. Among the remaining terms there are exactly $\binom{e^\prime}{e}$ terms with $\mu^+(I)=\nu$ (all with $|I|=\ell-e$). We thus have 

\begin{equation}\label{e}
b_{q^\mu}D_e\left(\prod_{i=1}^\ell x_i\right)-\sum_{I\subset\{1,\dots,\ell\}, |I|\leq\ell-e,\mu^+(I)\neq\nu} b_{q^{\mu^+(I)}}D_e\left(\prod_{j\in I^c}\theta^{q^{\nu_j}}\right)=\binom{e^\prime}{e}e!b_{q^\nu}.
\end{equation} 
By the minimality of $\nu$, we have $b_{\mu^+(I)}=c_{\mu^+(I)}$ for all the subsets on the left hand side (including $\mu=\mu^+(\emptyset)$), and since $\binom{e^\prime}{e}e!\neq 0$  we  obtain $b_{q^\nu}=c_{q^\nu}$;  a contradiction which proves the theorem.
\end{proof}

Our next goal is to present explicit solutions to the recurrence \eqref{newmaster}. To achieve that, consider  the action of the symmetric group $S_\ell$ on  $Y:=\F_q[x_1,\dots,x_\ell,\theta]$ given by $\sigma(x_i)=x_{\sigma(i)}$,  $\sigma(\theta)=\theta$,  and extended naturally to all of $Y$. We shall also need the following lemma.

\begin{lemma}\label{newkey}
Let $x_1,\dots, x_\ell$ and $t_1,\dots, t_\ell$ be any collections of variables, then the following identity holds

\begin{equation}\label{xt}
\prod_{i=1}^\ell x_i=\sum_{I\subset\{1,2,\dots, \ell\}} \left[\prod_{i\in I} (x_i-t_i)  \prod _{j\in I^c} t_j\right].
\end{equation}

\end{lemma}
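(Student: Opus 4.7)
The plan is to prove this identity by a single application of the distributive law. First I would write each factor as
\[
x_i = (x_i - t_i) + t_i,
\]
so that
\[
\prod_{i=1}^\ell x_i = \prod_{i=1}^\ell \bigl[(x_i - t_i) + t_i\bigr].
\]
Expanding the right-hand side by distributivity produces $2^\ell$ terms, one for each choice, at each index $i$, of either $(x_i - t_i)$ or $t_i$. Encoding such a choice by the subset $I \subset \{1,2,\dots,\ell\}$ consisting of those indices for which $(x_i - t_i)$ is selected (so that $I^c$ consists of those indices for which $t_j$ is selected), the expansion reads exactly
\[
\sum_{I\subset\{1,2,\dots,\ell\}} \left[\prod_{i\in I} (x_i - t_i) \prod_{j\in I^c} t_j\right],
\]
which is the right-hand side of \eqref{xt}.

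If one prefers an inductive argument, the $\ell=1$ case is the tautology $x_1 = (x_1-t_1)+t_1$, and for the inductive step one multiplies the identity for $\ell-1$ variables by $x_\ell = (x_\ell-t_\ell)+t_\ell$ and splits the resulting sum according to whether $\ell \in I$ or $\ell \in I^c$. There is no genuine obstacle in this lemma: it is a purely formal distributivity identity, and the only point worth emphasizing in a written proof is the bijection between the $2^\ell$ monomials arising from expansion and the subsets of $\{1,\dots,\ell\}$. The usefulness of the lemma presumably lies in the later sections, where (with an appropriate choice of the $t_i$) it will be applied to recast $\prod_i x_i$ into a form amenable to the recurrence \eqref{newmaster} so as to verify the explicit solutions exhibited there.
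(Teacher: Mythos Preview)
Your proposal is correct and essentially matches the paper's own proof: the paper carries out exactly the induction you sketch in your second paragraph, writing $x_{\ell+1}=(x_{\ell+1}-t_{\ell+1})+t_{\ell+1}$ and splitting the sum according to whether $\ell+1\in I$ or $\ell+1\in I^c$. Your first, non-inductive phrasing via a single distributive expansion is a slightly more direct way to say the same thing.
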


\begin{proof}
We shall proceed by induction on $\ell$. For $\ell=1$ we clearly have $x_1=(x_1-t_1)+t_1$, where the first summand corresponds to $I=\{1\}$ and the second to $I=\emptyset$. Assuming the truth of the statement for $\ell$, write
\[
\begin{split}
\prod_{i=1}^{\ell+1}x_i &=[(x_{\ell+1}-t_{\ell+1})+t_{\ell+1}]\prod_{i=1}^\ell x_i\\
&=\sum_{I\subset\{1,\dots,\ell+1\}, \ell+1 \in I}\left[\prod_{i\in I} (x_i-t_i)  \prod _{j\in I^c} t_j\right]+ \sum_{I\subset \{1,\dots,  \ell+1\}, \ell+1\in I^c}\left[\prod_{i\in I} (x_i-t_i)  \prod _{j\in I^c} t_j\right]\\
&=\sum_{I\subset \{1,\dots, \ell+1\}\in I}\left[\prod_{i\in I} (x_i-t_i)  \prod _{j\in I^c} t_j\right]
\end{split}
\]
The last equality, which completes the induction and establishes the result, follows since the conditions $\ell+1\in I$ and $\ell+1\in I^c$ partition the subsets of $\{1,\dots, \ell+1\}$ into two non-overlapping collections.
\end{proof}

\begin{theorem}[Explicit formulas]
Assume the notation above, and for any multiset $\nu\in V_\ell$ set 
\begin{equation}\label{recsol1}
P(\nu):=\prod_{i=1}^\ell\prod_{j=0}^{\nu_i-1} (x_i-\theta^{q^j}).
\end{equation}   Then the sequence given for any $\nu \in V_\ell$ by 
\begin{equation}\label{formula0}
b_{q^\nu}=\frac{1}{\ell!}\sum_{\sigma \in S_\ell} \sigma(P(\nu))
\end{equation}
is a solution to \eqref{newmaster} which satisfies $b_\ell=1$.
\end{theorem}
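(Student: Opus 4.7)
The plan is to verify the recurrence \eqref{newmaster} directly, exploiting Lemma \ref{newkey} with the strategic substitution $t_i = \theta^{q^{\nu_i}}$. First I will dispatch the initial condition: when $\nu = \{0,\dots,0\}$ each inner product $\prod_{j=0}^{-1}$ in the definition of $P(\nu)$ is empty, so $P(\nu) = 1$, every $\sigma(P(\nu)) = 1$, and hence $b_\ell = \ell!/\ell! = 1$ as required.

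The algebraic heart of the argument is an elementary observation: multiplying $P(\nu)$ by a factor $(x_i - \theta^{q^{\nu_i}})$ extends the $i$-th inner product $\prod_{j=0}^{\nu_i-1}$ to $\prod_{j=0}^{\nu_i}$. If I let $\nu^+_I$ denote the labeled tuple whose $i$-th entry equals $\nu_i+1$ for $i \in I$ and $\nu_i$ for $i \in I^c$ (a particular labeling of the multiset $\nu^+(I)$), this reads
\[
P(\nu)\prod_{i\in I}(x_i - \theta^{q^{\nu_i}}) = P(\nu^+_I).
\]
Applying Lemma \ref{newkey} with $t_i = \theta^{q^{\nu_i}}$ and then multiplying through by $P(\nu)$ will yield
\[
P(\nu)\prod_{i=1}^\ell x_i = \sum_{I\subset\{1,\dots,\ell\}} P(\nu^+_I)\prod_{j\in I^c}\theta^{q^{\nu_j}}.
\]
Next I apply each $\sigma \in S_\ell$ to this identity and sum; since $\prod_i x_i$ is symmetric in the $x_i$ and each $\theta^{q^{\nu_j}}$ is untouched by $\sigma$, this produces
\[
\Bigl(\sum_\sigma \sigma(P(\nu))\Bigr)\prod_{i=1}^\ell x_i = \sum_{I}\Bigl(\sum_\sigma \sigma(P(\nu^+_I))\Bigr)\prod_{j\in I^c}\theta^{q^{\nu_j}},
\]
which is exactly $\ell!$ times \eqref{newmaster}, and dividing by $\ell!$ finishes the verification.

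The only delicate point — which I expect to be the main, though rather modest, obstacle — is the labeling bookkeeping required to identify $\frac{1}{\ell!}\sum_\sigma \sigma(P(\nu^+_I))$ with $b_{q^{\nu^+(I)}}$, since the labeling $\nu^+_I$ arising naturally from the calculation need not coincide with whichever labeling is used to define $b_{q^{\nu^+(I)}}$ in the first place. This reduces to showing that $\sum_\sigma \sigma(P(\nu))$ depends only on the underlying multiset $\nu$ and not on the chosen ordering of its entries, which follows immediately by the reindexing $\sigma \mapsto \sigma\tau^{-1}$ in the sum whenever $\tau \in S_\ell$ permutes labels. Once this labeling-independence is recorded, the remainder of the proof is pure bookkeeping driven entirely by Lemma \ref{newkey}.
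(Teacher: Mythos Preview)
Your proposal is correct and follows essentially the same route as the paper: the key identity $P(\nu^+(I))=P(\nu)\prod_{i\in I}(x_i-\theta^{q^{\nu_i}})$, the application of Lemma~\ref{newkey} with $t_i=\theta^{q^{\nu_i}}$, and the symmetrization over $S_\ell$ using the invariance of $\prod_i x_i$ are exactly the paper's steps. Your explicit verification of the initial condition $b_\ell=1$ and your careful treatment of the labeling issue (showing $\sum_\sigma \sigma(P(\nu))$ depends only on the multiset) are points the paper leaves implicit, so if anything your write-up is slightly more thorough.
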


\begin{proof}
We start by noting that for any $I\subset\{1,\dots,\ell\}$ we have
\[
P(\nu^+(I))=P(\nu)\prod_{i\in I}(x_i-\theta^{q^{\nu_i}})
\]

The following verification shows that $b_{q^\nu}$ satisfies \eqref{newmaster}
\[
\begin{array}{ll}
\ds \ell!\sum_{I\subset\{1,\dots,\ell\}} b_{q^{\nu^+(I)}}\prod_{j\in I^c}\theta^{q^{\nu_j}}&=\ds\sum_{\sigma\in S_\ell}\sum_{I\subset\{1,\dots,\ell\}}\prod_{j\in I^c}\theta^{q^{\nu_j}}\sigma (P(\nu^+(I)))\\
&=\ds\sum_{\sigma\in S_\ell}\sigma\left(\sum_{I\subset\{1,\dots,\ell\}}\prod_{j\in I^c}\theta^{q^{\nu_j}} P(\nu^+(I))\right)\\
&=\ds\sum_{\sigma\in S_\ell}\sigma\left(P(\nu)\sum_{I\subset\{1,\dots,\ell\}}\prod_{j\in I^c}\theta^{q^{\nu_j}} \prod_{i\in I}(x_i-\theta^{q^{\nu_i}})\right)\\
&=\ds \prod_{i=1}^\ell x_i\sum_{\sigma\in S_\ell}\sigma\left(P(\nu)\right)=\ell! b_{q^\nu}\prod_{i=1}^\ell x_i, \\
\end{array}
\] 
where the penultimate equality follows from Lemma \ref{newkey} and the fact that $\sigma\left(P(\nu)\prod_{i=1}^\ell x_i\right)=\left(\prod_{i=1}^\ell x_i\right)\sigma(P(\nu))$ since $\prod_{i=1}^\ell x_i$ is invariant under $S_\ell$. This completes the proof.
\end{proof}

\section{Proof of Theorem \ref{modularmain} and consequences}

\begin{proof}[Proof of Theorem \ref{modularmain}] It is clear that \eqref{newmaster} and the quantities $P(\nu)$ specialize to \eqref{truemaster1} and $B(\nu)$, respectively,  upon setting  $x_i=\theta^{q^{N_i}}$ for $1\leq i\leq \ell$ and substituting $a_{1+q^\nu}$ for $b_{q^\nu}$. It thus follows that we can obtain \eqref{mainf} from \eqref{formula0} once we can establish the translational invariance property that we used in Theorem \ref{unq}. This can be deduced by noting that if we replace $\theta$ by a translate $\tilde{\theta}=\theta+c$ with $c\in\F_q$ as a generator of $A$ over $\F_q$, then the corresponding parameter $\tilde{t}$ at the cusp  will be equal to $t$. It also follows that if $\calT_{\theta,k} f=\theta^N f$ then $\calT_{\tilde{\theta},k} f=\tilde{\theta}^N f$, and hence the coefficients of $f$ must be invariant under $\theta\mapsto \tilde{\theta}$, which implies that $D_1(a_i)=0$ for all coefficients of $f$. 
\end{proof}


Using formula \eqref{mainf} as well as the tools developed in \S 3, we can now state the following vanishing criteria which generalizes what was observed in the examples of \S 2.

\begin{theorem}
Let $f\in M_{k,m}^2$ be as in Theorem \ref{modularmain}. Then we have $a_{1+q^\nu}=0$ if and only if  $N_i<\nu_i$ for some $1\leq i\leq \ell$.
\end{theorem}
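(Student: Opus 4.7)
The plan is to analyze the formula
\[
\ell!\, a_{1+q^\nu}=a_{1+\ell}\sum_{\sigma\in S_\ell}B^\sigma(\nu)
\]
supplied by Theorem~\ref{modularmain} directly. Throughout, I adopt the convention---dictated by the multiset nature of $\nu$ and confirmed by Example~\ref{ex1}---that both tuples $(N_1,\dots,N_\ell)$ and $(\nu_1,\dots,\nu_\ell)$ are listed in non-increasing order, so that the condition ``$N_i<\nu_i$ for some $i$'' amounts precisely to the failure of $\nu$ to be componentwise dominated by $N$. I also implicitly assume $a_{1+\ell}\neq0$, since otherwise Theorem~\ref{modularmain} forces every coefficient in question to vanish and the claim degenerates.

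A single summand $B^\sigma(\nu)$ vanishes exactly when some factor $\theta^{q^{N_{\sigma(i)}}}-\theta^{q^j}$ vanishes, that is, when $N_{\sigma(i)}\in\{0,\dots,\nu_i-1\}$ for some $i$. Equivalently, $B^\sigma(\nu)\neq0$ if and only if $\sigma$ supplies a ``matching'' with $N_{\sigma(i)}\ge\nu_i$ for every $i$. For the forward direction, I would deduce from $\nu_i>N_i$ (in sorted order) that no such matching exists: the values $\nu_1\ge\cdots\ge\nu_i$ all strictly exceed $N_i$, so a matching would have to inject $\{1,\dots,i\}$ into the set $\{j:N_j>N_i\}$, whose cardinality is at most $i-1$ by the sorted convention. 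Pigeonhole thus forces $B^\sigma(\nu)=0$ for every $\sigma$, giving $a_{1+q^\nu}=0$.

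The reverse direction is where the real work lies. Assuming $\nu_i\le N_i$ for all $i$, I would isolate the leading term of $\sum_\sigma B^\sigma(\nu)$ as a polynomial in $\theta$. For any $\sigma$ with $B^\sigma(\nu)\neq0$, every factor contributes leading term $\theta^{q^{N_{\sigma(i)}}}$, so $B^\sigma(\nu)$ has degree $\sum_i\nu_iq^{N_{\sigma(i)}}$ and leading coefficient $1$. By the rearrangement inequality, this degree is maximized over $S_\ell$ by the identity, yielding $d_{\max}=\sum_i\nu_iq^{N_i}$; the maximum is actually attained, since the identity is a valid matching under our hypothesis. The key observation is that $B^\sigma(\nu)$ depends only on the multiset of pairs $\{(\nu_i,N_{\sigma(i)})\}_{i=1}^{\ell}$, so every $\sigma$ achieving $d_\sigma=d_{\max}$ yields the \emph{same} polynomial $B^*$, and the maximal-degree contribution to the sum is $M\cdot B^*$, where $M$ is the number of such $\sigma$.

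The crucial step, which I expect to be the main obstacle, is to show $M\not\equiv0\pmod p$, so that the leading monomial $M\theta^{d_{\max}}$ survives and the sum is nonzero. A direct count expresses $M$ as a ratio of multinomial coefficients built from the multiplicities of the values appearing in $\nu$ and $N$, all of which are bounded by $\ell\le q-1<p$. Since every factorial in the numerator is of an integer strictly less than $p$, the numerator is coprime to $p$; as $M$ is a positive integer, it follows that $p\nmid M$. Consequently $\sum_\sigma B^\sigma(\nu)$ has nonzero leading term $M\theta^{d_{\max}}$, forcing $a_{1+q^\nu}\neq0$. It is worth noting that the bound $\ell\le q-1$ in force throughout the paper is exactly what rules out any unexpected characteristic-$p$ collapse of $M$; without it, the argument would genuinely break down.
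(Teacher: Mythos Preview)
Your argument is correct, but the nonvanishing half takes a genuinely different route from the paper's. Both proofs start from the explicit formula of Theorem~\ref{modularmain}, and your treatment of the vanishing direction is essentially the same pigeonhole argument the paper gives. The divergence is in the converse.

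The paper looks at the \emph{lowest}-degree term of $\sum_\sigma B^\sigma(\nu)$: every factor $\theta^{q^{N_{\sigma(i)}}}-\theta^{q^j}$ with $N_{\sigma(i)}>j$ has trailing monomial $-\theta^{q^j}$, so each nonzero $B^\sigma(\nu)$ has the \emph{identical} lowest term $(-\theta)^w$ with $w=\sum_i(q^{\nu_i}-1)/(q-1)$, independent of $\sigma$. Thus one only needs to count the set $U$ of $\sigma$ with $B^\sigma(\nu)\neq 0$, and the paper gives the closed form $|U|=\prod_{j=2}^\ell(j-i_j+1)$, each factor at most $\ell\le q-1$, hence $p\nmid |U|$.

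You instead look at the \emph{highest}-degree term, which forces you to do more: you need the equality case of the rearrangement inequality to conclude that every maximizing $\sigma$ produces the same pair-multiset (hence the same polynomial $B^*$), and then to count those $\sigma$. Your claim that $M$ is a ratio of products of factorials of the multiplicities is right---concretely $M=\dfrac{\prod_v m_v!\,\prod_n k_n!}{\prod_{v,n} c_{v,n}!}$ where $m_v,k_n,c_{v,n}$ are the obvious multiplicities---and since each $m_v,k_n\le\ell<p$, the numerator is prime to $p$ and $M$ a positive integer, so $p\nmid M$. This is sound; you might tighten the writeup by explicitly invoking the equality case of rearrangement (or the bubble-sort argument) rather than leaving ``same multiset of pairs'' as an observation. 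The paper's lowest-term trick buys a shorter proof because the common trailing monomial is visible at a glance, with no rearrangement analysis needed; your approach, on the other hand, yields the leading term of $a_{1+q^\nu}$ as a by-product, which is extra information the paper's argument does not directly provide.
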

\begin{proof}
First, we prove that the condition $\nu_i>N_i$ implies vanishing. For such $\nu$, we clearly have $B(\nu)=0$. We would like to also show that $B^\sigma(\nu)=0$ for all $\sigma\in S_\ell$, which would follow if we can show that for any such $\sigma$ there exists an index $j\in\{1,\dots,\ell\}$ for which $\nu_j >N_{\sigma(j)}$. Indeed we either have that $\sigma$ permutes the elements of $\{1,\dots, i-1\}$ among themselves, in which case we must have $\sigma(i)\geq i$ and hence $N_{\sigma(i)}\leq N_i< \nu_i$, or else there is a value $j\in\{1,\dots,i-1\}$ with $\sigma(j)\geq i$, again yielding $N_{\sigma(j)}\leq N_i<\nu_i\leq\nu_j$, establishing vanishing. To prove necessity, we note that if $\nu_i\leq N_i$ for all $1\leq i\leq \ell$, then clearly $B(\nu)\neq 0$. However, we might still have $B^\sigma(\nu)=0$ for some of the permutations $\sigma \in S_\ell$. Indeed, if we write, for  $1\leq j\leq \ell$, 
$i_j:=\min\{i: N_j\geq\nu_i\}$, 
and set $U:=\{\sigma \in S_\ell: i_j\leq\sigma(j)\leq \ell\}$, then it is not hard to see that $B^\sigma(\nu)\neq 0$ if and only if $\sigma\in U$. (Note that $1\leq i_j\leq j$ and hence the identity permutation is clearly in $U$). All the polynomials $B^\sigma(\nu)$ for $\sigma \in U$ have the same lowest degree term (namely $(-\theta)^w$ with $w:=\sum_{i=1}^\ell \frac{q^{\nu_i}-1}{q-1}$). A simple counting argument gives that 
\[
|U|=\prod_{j=2}^\ell (j-i_j+1),
\]
and hence the size of $U$ is not divisible by $p$ since none of its factors is. (We are utilizing $\ell\leq q-1$ in this step). Thus the lowest degree term of $\sum_{\sigma \in U}B^\sigma(\nu)$ has nonvanishing coefficient $|U| (-1)^w$, and it follows that $\sum_{\sigma \in S_\ell}B^\sigma(\nu)=\sum_{\sigma \in U}B^\sigma(\nu)$ doesn't vanish whenever $N_i\geq \nu_i$ for all $i$, completing the proof. 
\end{proof}
\begin{remark}
 We would like to point out that there is a subtle yet important difference between the action on $S_\ell$ on the polynomials $P(\nu)$ and on $B(\nu$). The former is in fact an action on all of $Y$ and defines  an endomorphism; in particular we have $\sigma(0)=0$. On the other hand, the action of $S_\ell$ appearing in \eqref{mainf} is only a permutation action of certain parameters in the definition of $B(\nu)$ and doesn't extend to all of $A$. As an explicit example, if we take $\nu=\{2,1\}$ and $\{N_1,N_2\}=\{3,1\}$ then $B(\nu)=(\theta^{q^{3}}-\theta)(\theta^{q^3}-\theta^q)(\theta^{q}-\theta)$ but $B^\sigma(\nu)=0$ with $\sigma=(1\ 2)$.
\end{remark}

\section{Examples}\label{examples}

We look more closely at some concrete cases of Theorem \ref{modularmain}, and in particular of Example \ref{ex1}. We work with $q=3$,  $\ell=1$ and $N_1=1$; thus yielding the eigensystem $\frakp^4$. Working out the conditions in \cite[Theorem 1.3]{Pet}, we have the infinite family of forms with that eigensystem which is given for $n \geq 1$ by
\[
f_{18n+4,4}=\sum_{a\in A_+}a^{18n}G_4(t_a), 
\]
where the Goss polynomial $G_4$ is given explicitly by $G_4(X)=X^4+\frac{X^2}{\theta^3-\theta}$. In particular, we see that we have such eigenforms with $A$-expansion in the set of weights $\{22,40,58,76, \dots\}$. In addition, direct computations on lower weights reveal normalized (i.e. leading coefficient $a_2=1$) double-cuspidal eigenforms $\phi_{12}$ and $\phi_{20}$ both with eigensystem $\frakp^4$ in weights $12$ and $20$ respectively. Neither $\phi_{12}$ nor $\phi_{20}$ possess an $A$-expansion. Nonetheless, all of these forms fall under the scope of Example \ref{ex1}(1), and as the $t$-expansions illustrate, they indeed \emph{all} have $a_4=\theta^3-\theta$ (in $\F_3$) and $a_{10}=a_{28}=a_{82}=0=a_{1+3^i}$ for all $i\geq 2$. Explicitly we have (with $g:=g_{q-1}=g_2$)

\begin{align*}
\phi_{12}=h^2 g^2&=
t^2 + (\theta^3 + 2\theta) t^4 + (\theta^6 + \theta^4 + \theta^2 + 2) t^6 + (\theta^9 + 2 \theta^3) t ^{12} + t ^{14} \\
& +    (2 \theta^3 +\theta) t ^{16}+ 2 t^{18} + (2 \theta^9 + 2 \theta^3 + 2\theta) t^{20}+ (2 \theta^{12} + \theta^{10} +     2 \theta^6 + \theta^2) t^{22}\\
& + (2 \theta^{15} + 2 \theta^{13} + 2 \theta^{11} + \theta^9 + \theta^7 + \theta^5 + \theta^3 +
    2\theta) t^{24} + t^{26} \\
&+ (2 \theta^{18} + \theta^{12} + \theta^{10} + 2 \theta^4 + 2) t^{30} + O(t^{32}).
\end{align*}

\begin{align*}
\phi_{20}=&t^2 + (\theta^3 + 2\theta) t^4 + (\theta^6 + \theta^4 + \theta^2 + 2) t^6 + (2 \theta^9 +\theta) t^8 + (2 \theta^9 + 
    2 \theta^3 + 2\theta) t^{12}\\
& + (\theta^{18} + \theta^{12} + 2 \theta^4 + 2 \theta^2 + 1) t^{14} + (2 \theta^3 + 
    \theta) t^{16} + (2 \theta^{18} + \theta^{12} + \theta^{10} + 2 \theta^4 + 2) t^{18}\\& + (\theta^{21} + 2 \theta^{19} + 2 \theta^{15} 
    + \theta^{11} + \theta^9 + \theta^7 + 2 \theta^5 + 2 \theta^3) t^{20} + (\theta^{12} + 2 \theta^{10} + 2 \theta^6 + 
    \theta^4) t^{22}\\& + (\theta^{15} + \theta^{13} + \theta^{11} + 2 \theta^9 + 2 \theta^7 + 2 \theta^5 + 2 \theta^3 +\theta) t^{24} + 
    (\theta^{18} + \theta^{10} + \theta^2 + 1) t^{26} \\&+ (2 \theta^{12} + \theta^{10} + \theta^4 + 2 \theta^2 + 2) t^{30} + 
   O(t^{32}).
\end{align*}

Set $\phi_{22}=(\theta^3-\theta)f_{22,4}$. We have $\phi_{22}=h^2 g^7-(\theta^3-\theta)h^4g^3$ and 

\begin{align*}
\phi_{22}&=
t^2 + (\theta^3 + 2\theta) t^4 + 2 t^6 + (\theta^9 + 2\theta) t^8 + (2 \theta^9 +\theta) t^{12} + (\theta^{18} + 
    \theta^{12} + 2 \theta^4 + 2 \theta^2 + 1) t^{14}\\
& + (2 \theta^{21} + \theta^{19} + 2 \theta^{13} + \theta^{11} + 2 \theta^5 + 
    T) t^{16} + (2 \theta^{18} + 2 \theta^{10} + \theta^6 + \theta^4 + 2) t^{18}\\
& + (2 \theta^{21} + \theta^{19} + 2 \theta^{13} +
    \theta^{11} + \theta^9 + 2 \theta^5 + 2 \theta^3 +\theta) t^{20}\\
& + (2 \theta^{24} + 2 \theta^{22} + 2 \theta^{20} + 2 \theta^{16} + 
    2 \theta^{14} + 2 \theta^{10} + 2 \theta^8 + 2 \theta^4 + 2 \theta^2) t^{22} + t^{26} + 2 t^{30}+O(t^{32}).
\end{align*}

\begin{remark}
It is instructive to compare the previous examples with the expansion of $\Delta=\sum a_is^i \in M_{8,0}^2$  in powers of $s=t^2$ over $\F_3$. As alluded to in the Introduction, Gekeler's general results \cite{Gekeler88, Gekeler99} imply  the following  for $a_i\in \F_3[\theta]$: (i) $a_i\neq 0$ implies $i\equiv 0,1 \pmod 3$, (ii) $\deg(a_{1+i})\leq i$, and (iii) $\deg(a_{1+i})=i$ if and only $i\equiv 0,3 \pmod {9}$. The first few instances of those results may indeed be verified from the initial terms of the expansion of $\Delta$ below. On the other hand, it is clear that those special properties of $\Delta$ are not shared by any of the forms $\phi_{12}$, $\phi_{20}$ and $\phi_{22}$ above.

\begin{align*}
\Delta&=t^2 + 2t^6 + (\theta^3 + 2\theta)t^8 + t^{14} + 2t^{18} + (2\theta^9 + 2\theta^3 + 2\theta)t^{20} +    (\theta^9 + 2\theta^3)t^{24} \\
&+ (2\theta^{12} + \theta^{10} + \theta^6 + 2\theta^4 + 1)t^{26} + 2t^{30}+O(t^{32})
\end{align*}

\end{remark}



\end{document}